\newtheorem{thm}{Theorem}[section]
\newtheorem{prop}[thm]{Proposition}
\newtheorem{prob}[thm]{Problem}
\theoremstyle{remark}
\theoremstyle{definition}
\newtheorem{Def}[thm]{Definition}
\title{A splitting theorem on toric varieties}
\author{Hongnian Huang}
\address{Hongnian Huang, CMLS, \'Ecole Polytechnique, 91128, Palaiseau, France}
\email{hnhuang@gmail.com}
\thanks{The author would like to thank Vestislav Apostolov, Paul Gauduchon and G\'abor Sz\'ekelyhidi for stimulating discussions. His research is financially supported by the Fondation math\'ematique Jacques Hadamard, Paris, France.} 
\begin{document}

\maketitle

\begin{abstract}
Using the short time existence of the Calabi flow, we prove that any extremal K\"ahler metric $\omega_E$ on a product toric variety $X_1 \times X_2$ is a product extremal K\"ahler metric.
\end{abstract}

In \cite{AH}, the authors proposed the following problem: 

\begin{prob}
Let $X_i, i=1,2$ be two K\"ahler manifolds with K\"ahler classes $[\omega_i]$. Suppose $\omega_E$ is an extremal K\"ahler metric in the K\"ahler class $[\omega_1 + \omega_2]$. Can we conclude that $\omega_E$ is a product metric, i.e., $\omega_E = \omega_{E,1} + \omega_{E,2}$ where $\omega_{E,i}$ is an extremal K\"ahler metric in $[\omega_i]$.
\end{prob}
In this short note, we solve the problem in the case of  toric varieties.

\begin{thm}
\label{main}
If in addition $X_i$ are toric varieties, then $\omega_E$ is a product metric.
\end{thm}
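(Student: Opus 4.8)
The plan is to work $T$-equivariantly, reduce the assertion to a minimization property of the Calabi energy that splits over the product, and then use the short-time existence of the Calabi flow on the two toric factors to manufacture a product extremal metric, which can finally be identified with $\omega_E$. First I would normalize: by Calabi's theorem the identity component of $\mathrm{Isom}(\omega_E)$ is a maximal compact subgroup of $\mathrm{Aut}(X_1\times X_2)$, so after conjugating by an automorphism we may assume $\omega_E$ is invariant under the compact torus $T=T_1\times T_2$. In the Guillemin--Abreu correspondence $\omega_E$ is then a symplectic potential $u$ on the moment polytope of $[\omega_1+\omega_2]$, which is the product $P=P_1\times P_2$, and $\omega_E$ is extremal precisely when Abreu's scalar curvature $S(u)=-\sum_{i,j}\partial_i\partial_j u^{ij}$ is affine on $P$. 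An affine function on a product polytope must have the form $S(u)=\bar S+\ell_1(x')+\ell_2(x'')$ with $\ell_i$ affine, so the extremal vector field $V$ of $(X_1\times X_2,[\omega_1+\omega_2])$ --- attached to the class by the Futaki--Mabuchi construction --- splits as $V=V_1+V_2$ with $V_i\in\mathfrak{t}_i^{\mathbb{C}}$ the extremal field of $(X_i,[\omega_i])$. The theorem is then equivalent to the statement that $\mathrm{Hess}(u)$ is block-diagonal, i.e. that $u$ agrees, up to an affine function, with a sum $u_1(x')+u_2(x'')$.

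Next I would move to the variational picture. Because $[\omega_1+\omega_2]$ carries an extremal metric, $\omega_E$ minimizes the Calabi energy $\mathrm{Ca}(\omega)=\int_X S(\omega)^2\,\omega^{n+m}$ in its class (it makes the relative Calabi energy $\int_X(S-S_V)^2$ vanish), and in the toric setting the relative $K$-energy is proper modulo $\mathrm{Aut}^0$. Using that the scalar curvature of a K\"ahler product is $S(\eta_1)+S(\eta_2)$ and that in symplectic coordinates the volume form is Lebesgue measure on $P$, one gets the exact splitting $\mathrm{Ca}(\eta_1\oplus\eta_2)=\mathrm{Vol}(X_2)\,\mathrm{Ca}(\eta_1)+\mathrm{Vol}(X_1)\,\mathrm{Ca}(\eta_2)+\mathrm{const}$ along product metrics, together with a parallel splitting of the relative $K$-energy on product potentials; combined with the additivity of the Futaki--Mabuchi norm under products (the extremal fields $V_1,V_2$ are orthogonal), this yields $\inf_X\mathrm{Ca}=\inf_{\mathrm{products}}\mathrm{Ca}$, the left-hand side being attained at $\omega_E$. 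So it remains to attain this common value by a product metric.

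This is where the Calabi flow enters. In symplectic coordinates the flow reads $\partial_t u=\bar S-S(u)$; invoking the short-time existence theorem, I would run it on each factor $X_i$ starting from a minimizing sequence of $T_i$-invariant potentials for the relative $K$-energy on $P_i$. Since scalar curvature is additive over K\"ahler products, these flows assemble into the Calabi flow on $X_1\times X_2$ started from the corresponding product potential, so that flow remains a product for all times of existence, while along it the relative $K$-energy (hence the Calabi energy) is nonincreasing. Properness of the relative $K$-energy on $X_1\times X_2$ then forces properness on each $X_i$, which confines the factor flows; together with the convergence analysis of the Calabi flow on toric varieties this should produce extremal metrics $\omega_{E,i}$ on $X_i$, hence a product extremal metric $\omega_{E,1}\oplus\omega_{E,2}$ in $[\omega_1+\omega_2]$ attaining $\inf_X\mathrm{Ca}$. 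I expect this step to be the main obstacle: transferring properness to the factors, and turning the short-time flow into a genuine extremal limit without losing the product structure --- equivalently, controlling the factor flows long enough to reach an extremal metric.

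Finally, to identify $\omega_E$: both $\omega_E$ and $\omega_{E,1}\oplus\omega_{E,2}$ are extremal in $[\omega_1+\omega_2]$ on the toric variety $X_1\times X_2$, so by uniqueness of extremal K\"ahler metrics up to automorphism --- available here from the strict convexity, modulo affine functions, of the Abreu/relative-$K$-energy functional on symplectic potentials --- we get $\omega_E=\Psi^*(\omega_{E,1}\oplus\omega_{E,2})$ for some $\Psi\in\mathrm{Aut}^0(X_1\times X_2)$. Since $H^0(X_i,\mathcal{O})=\mathbb{C}$ gives $H^0(X_1\times X_2,T)=H^0(X_1,T)\oplus H^0(X_2,T)$, we have $\mathrm{Aut}^0(X_1\times X_2)=\mathrm{Aut}^0(X_1)\times\mathrm{Aut}^0(X_2)$, so $\Psi=\psi_1\times\psi_2$ and $\omega_E=\psi_1^*\omega_{E,1}\oplus\psi_2^*\omega_{E,2}$ is a product extremal metric, which is Theorem~\ref{main}.
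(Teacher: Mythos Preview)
Your strategy has a genuine gap at the step where you manufacture extremal metrics $\omega_{E,i}$ on the factors. You rely on long-time existence and convergence of the Calabi flow on arbitrary toric varieties, which is not available in the generality you need; the known convergence results (near an extremal metric, or for toric surfaces under stability hypotheses) do not hand you an extremal limit from a minimizing sequence. The properness transfer ``properness on $X_1\times X_2$ forces properness on each $X_i$'' is asserted but not argued, and even granting it you would still face the convergence problem you yourself flag as ``the main obstacle.'' Without this step you never obtain a product extremal metric, so the uniqueness/automorphism-splitting endgame (which is fine) has nothing to act on.

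The paper sidesteps all of this with a short argument that needs only \emph{short-time} existence of the Calabi flow. Write the extremal potential as $u(x,y)=u_1(x)+u_2(y)+f(x,y)$ and form the fibre averages $f_1(x)=\mathrm{vol}(P_2)^{-1}\int_{P_2}f\,dy$, $f_2(y)=\mathrm{vol}(P_1)^{-1}\int_{P_1}f\,dx$; then $v=u_1+u_2+f_1+f_2$ is a \emph{separable} symplectic potential, and an elementary computation shows that $v$ is the $L^2(P)$-closest separable potential to $u$. Now start the (modified) Calabi flow at $v$: the flow preserves separability, while by Calabi--Chen the $L^2$ distance to the extremal (hence stationary) potential $u$ is nonincreasing. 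Since $v$ already realizes the minimum of this distance among separable potentials, the flow is forced to be stationary at $v$; thus $v$ is itself extremal, and uniqueness of toric extremal potentials up to affine functions gives that $u-v$ is affine, so $f$ is separable. The decisive idea you are missing is this $L^2$-projection-plus-distance-decreasing trick, which turns the problem into a rigidity statement and eliminates any need for long-time convergence or for producing extremal metrics on the factors beforehand.
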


Let $X_i, i=1,2$ be toric varieties with K\"ahler classes $[ \omega_i ]$. Let $P_i$ be the corresponding Delzant polytopes. Then the corresponding polytope $P = P_1 \times P_2$. In the symplectic side, we have symplectic potentials $u_i$ satisfying Guillemin boundary conditions of $P_i$. We let $x$ be the variable of $P_1$ and $y$ be variable of $P_2$. Our assumption shows that there exists a symplectic potential $u$ on $P$ and 
$$
u(x,y) = u_1(x) + u_2(y) + f(x,y), \quad f(x,y) \in C^\infty(\bar{P}) 
$$
such that the scalar curvature of $u(x,y)$ is an affine function. Our goal is to show that $f(x,y)$ is separable. Let
$$
f_1(x) = \frac{1}{vol(P_2)} \int_{P_2} f(x,y)~ dy, \quad f_2(y) = \frac{1}{vol(P_1)} \int_{P_1} f(x,y)~ dx.
$$
Then we have

\begin{prop}
$ v(x,y) = u_1(x) + u_2(y) + f_1(x) + f_2(y) $ is a symplectic potential of $P$ satisfying the Guillemin boundary conditions.
\end{prop}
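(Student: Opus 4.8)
The plan is to reduce the statement to the single observation that the separated correction $w_1(x):=u_1(x)+f_1(x)$ (and symmetrically $w_2(y):=u_2(y)+f_2(y)$) is, up to an additive constant, just the fibrewise average of the given potential $u$. Substituting $u=u_1(x)+u_2(y)+f(x,y)$ into the definition of $f_1$ yields
$$
w_1(x)=\frac{1}{vol(P_2)}\int_{P_2}u(x,y)\,dy-c_2,\qquad c_2:=\frac{1}{vol(P_2)}\int_{P_2}u_2(y)\,dy,
$$
and symmetrically $w_2(y)=\frac{1}{vol(P_1)}\int_{P_1}u(x,y)\,dx-c_1$. Since $v=w_1(x)+w_2(y)$ is separated, it then suffices to prove that $w_1$ is a symplectic potential of $P_1$ and $w_2$ one of $P_2$: the product polytope $P=P_1\times P_2$ carries the product Delzant structure, with defining affine functions $\ell^{(1)}_k(x)$ and $\ell^{(2)}_l(y)$ and Guillemin potential $u_{guil,P}(x,y)=u_{guil,1}(x)+u_{guil,2}(y)$, so a sum of symplectic potentials of the two factors is at once a symplectic potential of $P$ (the Hessians are block-diagonal and positive definite, the boundary expansions add, and the determinant condition multiplies).

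First I would dispose of the smoothness. With $\phi:=u-u_{guil,P}\in C^\infty(\bar P)$, the identity above gives $w_1-u_{guil,1}=(\text{const})+\frac{1}{vol(P_2)}\int_{P_2}\phi(x,y)\,dy$, which lies in $C^\infty(\bar P_1)$ by differentiation under the integral sign, legitimate because $\phi$ and all its $x$-derivatives are continuous on the compact set $\bar P_1\times\bar P_2$. Equivalently $f_1\in C^\infty(\bar P_1)$ and $f_2\in C^\infty(\bar P_2)$, so $v-u_{guil,P}=f_1+f_2\in C^\infty(\bar P)$, and in particular $v$ extends continuously, hence convexly, to $\bar P$.

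Next the strict convexity. As $v$ is separated, $\mathrm{Hess}(v)=\mathrm{diag}\bigl(\mathrm{Hess}_x w_1,\ \mathrm{Hess}_y w_2\bigr)$, and differentiating the displayed identity under the integral sign gives $\mathrm{Hess}_x w_1(x)=\frac{1}{vol(P_2)}\int_{P_2}\bigl(\mathrm{Hess}_x u\bigr)(x,y)\,dy$, where $\mathrm{Hess}_x u$ is the $x$-block of $\mathrm{Hess}\,u$. For $(x,y)$ in the interior this block is a principal submatrix of the positive-definite matrix $\mathrm{Hess}\,u(x,y)$, hence positive definite; averaging a measurable, integrable family of positive-definite matrices over a set of positive measure again produces a positive-definite matrix, so $\mathrm{Hess}_x w_1>0$ on $P_1^\circ$, and similarly $\mathrm{Hess}_y w_2>0$ on $P_2^\circ$. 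Thus $v$ is smooth and strictly convex on $P^\circ$.

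The point that needs the most care, and the one I expect to be the real obstacle, is the non-degeneracy of the Hessian along the lower-dimensional boundary strata of $P$ — the ``determinant'' part of the Guillemin conditions. I would establish it by the same averaging trick carried out one dimension at a time: the restriction of the symplectic potential $u$ to a face $F\times P_2$ of $P$ (with $F$ a face of $P_1$) is again a symplectic potential, of the product polytope $F\times P_2$, and still has the separated form $u_1|_F(x)+u_2(y)+f|_{F\times P_2}(x,y)$; applying the computation of the previous paragraph in this lower dimension shows that the restriction of $\mathrm{Hess}\,w_1$ to the relative interior of each face of $P_1$ is positive definite. Together with the smoothness of $w_1-u_{guil,1}$ this is precisely the Guillemin boundary condition for $w_1$ on $P_1$, and likewise for $w_2$ on $P_2$, which by the first paragraph completes the proof. (If one adopts the formulation requiring only $u-u_{guil}\in C^\infty(\bar P)$ together with strict convexity on $P^\circ$, this last paragraph is unnecessary.)
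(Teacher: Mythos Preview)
Your argument is correct and is essentially the same as the paper's: the paper observes that $f_1+f_2\in C^\infty(\bar P)$ and then reduces $D^2v>0$ to $D^2(u_1+f_1)>0$ and $D^2(u_2+f_2)>0$, which it says ``just follow from the fact that $(D^2u)>0$'' --- exactly your averaging of the positive-definite $x$-block of $\mathrm{Hess}\,u$ over $P_2$. Your extra paragraph on the face-by-face determinant condition goes beyond what the paper writes; as you yourself note, the paper is implicitly using the formulation in which Guillemin boundary conditions amount to $v-u_{guil,P}\in C^\infty(\bar P)$ together with strict convexity on $P^\circ$, so that step is not needed there.
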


\begin{proof}
It is easy to see that $f_1(x) + f_2(y)$ is a smooth function on $\bar{P}$. Thus we only need to show that $(D^2 v)$ is a positive matrix in order to prove that $v$ is a symplectic potential. To show $(D^2 v) > 0$ is equivalent to show that $(D^2 (u_1(x) + f_1(x))) > 0$ and $(D^2 (u_2(y) + f_2(y))) > 0$. However, $(D^2 (u_1(x) + f_1(x))) > 0$ and $(D^2 (u_2(y) + f_2(y))) > 0$ just follow from the fact that $(D^2 u) > 0$.
\end{proof}

Let $\mathcal{H}$ be the set of all symplectic potentials. We define a subset of $\mathcal{H}$.
\begin{Def}
\begin{eqnarray*}
\mathcal{M} &=& \{ \underline{u}(x,y) \in \mathcal{H} ~ | ~  \underline{u}(x,y)  = u_1(x) +u_2(y) + g_1(x) + g_2(y)~ s.t. \\
& & g_1(x) \in C^\infty(\bar{P}_1), ~ \int_{P_1} f_1(x) ~ dx = \int_{P_1} g_1(x) ~ dx, \\
& & g_2(y) \in C^\infty(\bar{P}_2),  ~ \int_{P_2} f_2(y) ~ dy = \int_{P_2} g_2(y) ~ dy\} .
\end{eqnarray*}
\end{Def}

Then we have
\begin{prop}
\begin{eqnarray}
\label{minimizer}
\int_P (u(x,y) - v(x,y))^2 ~ dxdy = \min_{\underline{u} \in \mathcal{M}} \int_P (u(x,y)- \underline{u}(x,y))^2 ~ dxdy.
\end{eqnarray}
\end{prop}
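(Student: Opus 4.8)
The plan is to read \eqref{minimizer} as the assertion that $v$ is the $L^2(P)$-orthogonal projection of $u$ onto the affine subspace $\mathcal{M}$, and to prove it by the standard Pythagorean argument. First I would check that $v \in \mathcal{M}$: we have $v = u_1(x) + u_2(y) + f_1(x) + f_2(y)$, and $f_1 \in C^\infty(\bar P_1)$, $f_2 \in C^\infty(\bar P_2)$ by differentiating under the integral sign (since $f \in C^\infty(\bar P)$); the normalization conditions $\int_{P_1} f_1\,dx = \int_{P_1} f_1\,dx$ and $\int_{P_2} f_2\,dy = \int_{P_2} f_2\,dy$ are tautological; and $v \in \mathcal{H}$ by the preceding proposition. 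Hence the right-hand side of \eqref{minimizer} is $\le \int_P (u-v)^2\,dxdy$, and it suffices to prove the reverse inequality for every competitor.

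So fix an arbitrary $\underline{u} = u_1(x) + u_2(y) + g_1(x) + g_2(y) \in \mathcal{M}$. Writing $u - \underline{u} = (u-v) + (v - \underline{u})$ with $u - v = f - f_1 - f_2$ and $v - \underline{u} = (f_1 - g_1) + (f_2 - g_2)$, I expand
$$
\int_P (u - \underline{u})^2 \, dxdy = \int_P (u-v)^2\, dxdy + 2 \int_P (u-v)(v - \underline{u})\, dxdy + \int_P (v - \underline{u})^2\, dxdy .
$$
The last term is $\ge 0$, so the whole proof reduces to showing that the cross term vanishes; by linearity it is enough to check $\int_P (f - f_1 - f_2)(f_1(x) - g_1(x))\, dxdy = 0$ together with the symmetric identity obtained by swapping $x \leftrightarrow y$.

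The key computation is that $\int_{P_2}\big(f(x,y) - f_1(x) - f_2(y)\big)\, dy$ is a \emph{constant} independent of $x$: the first two terms cancel by the very definition of $f_1$, while $\int_{P_2} f_2(y)\, dy = \tfrac{1}{vol(P_1)} \int_P f\, dxdy$. Therefore, carrying out the $y$-integration first in $\int_P (f - f_1 - f_2)(f_1(x) - g_1(x))\, dxdy$ factors this constant out and leaves $\big(\text{const}\big) \cdot \int_{P_1}(f_1(x) - g_1(x))\, dx$, which is $0$ because $\int_{P_1} f_1\,dx = \int_{P_1} g_1\,dx$ for $\underline{u} \in \mathcal{M}$. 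The symmetric identity follows in the same way by integrating in $x$ first and invoking $\int_{P_2} f_2\,dy = \int_{P_2} g_2\,dy$. Thus $\int_P (u - \underline{u})^2 \ge \int_P (u-v)^2$ for all $\underline{u} \in \mathcal{M}$, and since $v \in \mathcal{M}$ attains the value $\int_P (u-v)^2$, \eqref{minimizer} follows.

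There is no real analytic obstacle here; the one point that requires care is that $\int_{P_2}(f - f_1 - f_2)\,dy$ does not vanish pointwise but only modulo the additive constant $\tfrac{1}{vol(P_1)}\int_P f$, so the argument genuinely relies on the mean-value normalizations built into the definition of $\mathcal{M}$ — drop them and $v$ ceases to be the minimizer.
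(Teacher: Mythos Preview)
Your proof is correct and follows essentially the same route as the paper's: both amount to the direct $L^2$ expansion showing that $\int_P(f-g_1-g_2)^2 - \int_P(f-f_1-f_2)^2$ equals $\int_P(f_1-g_1)^2 + \int_P(f_2-g_2)^2 \ge 0$, with your version simply making the orthogonal-projection structure and the role of the normalization constraints explicit. The paper's argument is terser and silently absorbs the cross-term cancellations (which, as you observe, also require the mean-value conditions in the definition of $\mathcal{M}$).
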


\begin{proof}
(\ref{minimizer}) is equivalent to show that
$$
\int_P (f(x,y) - f_1(x) - f_2(y))^2 ~ dxdy \leq \int_P (f(x,y)- g_1(x) - g_2(y))^2 ~ dxdy.
$$
Expressing it out, we have
\begin{eqnarray*}
& &\int_P - 2 f(x,y) (f_1(x) + f_2(y)) + f_1^2(x) + f_2^2(y) ~ dxdy \\
&\leq& \int_P -2 f(x,y) (g_1(x) + g_2(y)) + g_1^2(x) + g_2^2(y) ~ dxdy
\end{eqnarray*}
which is equivalent to
$$
0 \leq \int_P (f_1(x) - g_1(x))^2 + (f_2(y) - g_2(y))^2 ~ dxdy.
$$

The equality holds iff $f_1(x) = g_1(x)$ and $f_2(y) = g_2(y)$.
\end{proof}

\begin{proof}[Proof of Theorem (\ref{main}).]
We use the Calabi flow to show that $v$ is an extremal symplectic potential. We need the short time existence of the Calabi flow due to Chen and He \cite{ChenHe} and the modified version of the Calabi flow in \cite{HZ}.
Let $u(t)$ be a sequence of symplectic potentials satisfying the modified Calabi flow equation on $P$ and $u(0) = v$. By Calabi and Chen's result \cite{CC}, we have
$$
\frac{d}{d t} \int_P (u(t) - u)^2 ~ dxdy \leq 0.
$$
Since $u(t) \in \mathcal{M}$, we obtain
$u(t) = v$. It shows that $v$ is a separable extremal symplectic potential on $P$. It is easy to conclude that $f(x,y)$ is separable.
\end{proof}

\end{document}